\pdfoutput=1
\documentclass[11pt]{article}
\usepackage{amsmath, amssymb, amsthm}
\usepackage[a4paper,top=2.5cm,bottom=2.5cm,left=2.8cm,right=2.8cm]{geometry}
\theoremstyle{plain}
\newtheorem{theorem}{Theorem}
\newtheorem{lemma}{Lemma}
\newtheorem{corollary}{Corollary}
\theoremstyle{definition}
\newtheorem{definition}{Definition}
\theoremstyle{remark}
\newtheorem{remark}{Remark}
\newtheorem{problem}{Open Problem}

\title{Circular $s$-choice parking functions:\\ an exact closed formula via rotational symmetry}
\author{Asma Recioui \quad Hac\`ene Belbachir\thanks{Corresponding author: \texttt{hacenebelbachir@gmail.com}, \texttt{hbelbachir@usthb.dz}. ORCID: 0000-0001-8540-3033.} \quad Abdelhakim AitZai\\[2mm]
{\small RECITS Laboratory, USTHB, Algiers, Algeria}\\
{\footnotesize \texttt{reciouiasma@gmail.com}, \texttt{hacenebelbachir@gmail.com}, \texttt{h.aitzai@gmail.com}}}
\date{}

\begin{document}
\maketitle

\begin{abstract}
We study a circular variant of the $s$-choice parking model: $n$ cars park on $m=n+1$ spots arranged on a circle, each car carrying an anchor and $s-1$ clockwise increments at least $d$ apart with return gap at least $d$; a car tries its choices in order and then sweeps clockwise from its last choice. On the circle every car parks and exactly one spot remains empty. Exploiting rotational symmetry in the spirit of Pollak's proof of the count $(n+1)^{n-1}$, we prove that the empty spot is exactly equidistributed, which yields the closed formula $m^{n-1}\binom{m-sd+s-1}{s-1}^{n}$ for the number of preferences leaving any prescribed spot empty. This appears to be the first closed product formula in the multi-choice parking landscape. We further prove a refinement: within every class of preferences with prescribed increments, the empty spot is still exactly equidistributed, which explains the product structure of the formula and, for $s=2$, $d=1$, the appearance of the classical count $(n+1)^{n-1}$ as a factor of $(n+1)^{n-1}n^{n}$. The admissible tuples are enumerated through Kaplansky's lemma on circular selections, and all results are verified by exhaustive computer enumeration.
\end{abstract}

\noindent\textbf{Keywords:} parking functions, exact enumeration, rotational symmetry, powers of cycles, Kaplansky's lemma.\\
\textbf{MSC Classification:} 05A15, 05A19, 05C69.

\section{Introduction}\label{sec:intro}

Parking functions are among the most studied objects of enumerative combinatorics, yet closed product formulas for their multi-choice generalizations are essentially absent: the known enumerations are sums over permutations, with no product form. The purpose of this note is to exhibit a natural multi-choice model that does admit an exact closed product formula, and to explain the product structure by a symmetry that survives conditioning. The model is circular, and the method is Pollak's.

Parking functions were introduced by Konheim and Weiss \cite{KonheimWeiss}: $n$ cars enter a one-way street with spots $1,\dots,n$; car $c_i$ drives to its preferred spot and, if it is occupied, proceeds forward to the first free spot; the preference vectors for which all cars park number $(n+1)^{n-1}$. Pollak's celebrated proof --- transferring the process to a circle of $n+1$ spots, where every preference parks all cars and leaves exactly one spot empty, and then invoking rotational symmetry --- is the paradigm we generalize. See Yan's chapter in \cite{Handbook} for a survey.

Multi-choice variants --- each car carrying $s$ preferred spots with consecutive gaps at least $d$, tried in order before the forward search --- can be enumerated on the linear street by summations over the $n!$ permutations, but those summations are not closed forms, and demonstrably cannot be: exhaustive computation of the linear two-choice counts ($1$, $24$, $990$, $65\,940$, $6\,545\,700,\dots$ for $d=1$) reveals sporadic large prime factors such as $157$, $1039$ and $10\,937$, ruling out any product formula. This is precisely what makes the circular model worth isolating. 

Our contribution is threefold. First (Theorem \ref{thm:closed}), the circular $s$-choice model admits the exact closed formula $m^{n-1}\binom{m-sd+s-1}{s-1}^{n}$, obtained by the same rotational symmetry as Pollak's argument; to our knowledge it is the first closed product formula in the multi-choice parking landscape. Second (Theorem \ref{thm:refined}), the symmetry localizes: it holds within each class of preferences with prescribed increments, which explains \emph{why} the count is a product and, for $s=2$ and $d=1$, why the classical number $(n+1)^{n-1}$ appears as a factor of $(n+1)^{n-1}n^{n}$. Third, the admissible tuples are identified with circular selections counted by Kaplansky's lemma, tying the model to the theory of independent sets in powers of cycles. The refinement also opens the systematic study of statistics on the circle, which we leave as a direction for future work.

\section{The circular model}\label{sec:model}

Let $m\geq 2$ and identify the spots with $\mathbb{Z}_m=\{1,\dots,m\}$, arranged clockwise on a circle. For $x\in\mathbb{Z}_m$ and $k\geq 0$ we write $x\oplus k$ for clockwise displacement by $k$ modulo $m$.

\begin{definition}[Admissible tuples]\label{def:ctuples}
Fix $s\geq 1$ and $d\geq 1$. An admissible $s$-tuple is
$$(a_1,\dots,a_s)=\bigl(a,\ a\oplus k_1,\ \dots,\ a\oplus(k_1+\cdots+k_{s-1})\bigr)$$
with $a\in\mathbb{Z}_m$, increments $k_i\geq d$, and $k_1+\cdots+k_{s-1}\leq m-d$, so the return gap from $a_s$ to $a_1$ is at least $d$. Their number is
\begin{equation}\label{eq:tuples}
m\cdot\#\{(k_1,\dots,k_{s-1}):k_i\geq d,\ \textstyle\sum k_i\leq m-d\}=m\binom{m-sd+s-1}{s-1},
\end{equation}
vanishing precisely when $m<sd$.
\end{definition}

\begin{definition}[Circular parking]\label{def:cpark}
Let $n:=m-1$ cars $c_1,\dots,c_n$ enter in order, car $c_j$ carrying an admissible tuple. Car $c_j$ tries its choices in order and parks in the first free one; if all are occupied, it sweeps clockwise from its last choice. A \emph{circular preference} is a choice of admissible tuples for all cars; the set $\mathcal{C}_m^{s,d}$ has cardinality $[m\binom{m-sd+s-1}{s-1}]^n$ by \eqref{eq:tuples}.
\end{definition}

Since $n=m-1<m$, the sweep always succeeds: every preference parks all cars and leaves one empty spot, denoted $E(A)\in\mathbb{Z}_m$.

\section{The closed formula}\label{sec:closed}

Let $\rho:\mathbb{Z}_m\to\mathbb{Z}_m$, $x\mapsto x\oplus 1$, act on preferences coordinatewise.

\begin{lemma}[Equivariance]\label{lem:equiv}
The rotation preserves admissibility, and $E(\rho A)=\rho(E(A))$ for every $A\in\mathcal{C}_m^{s,d}$.
\end{lemma}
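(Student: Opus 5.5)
The plan is to prove both claims by tracking the parking process step by step under the rotation $\rho$. We use that $\rho$ is a bijection of $\mathbb{Z}_m$ commuting with every displacement: $\rho(x\oplus k)=\rho(x)\oplus k$.

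\emph{Admissibility.} If $(a_1,\dots,a_s)=(a,\,a\oplus k_1,\,\dots,\,a\oplus(k_1+\cdots+k_{s-1}))$, then
$$\rho(a_1,\dots,a_s)=\bigl(\rho a,\ \rho a\oplus k_1,\ \dots,\ \rho a\oplus(k_1+\cdots+k_{s-1})\bigr).$$
This tuple has anchor $\rho a$ and exactly the same increments. The conditions $k_i\ge d$ and $\sum k_i\le m-d$ involve only the increments, so they are unchanged, and $\rho A\in\mathcal{C}_m^{s,d}$. Since $\rho^{-1}$ acts the same way, $\rho$ is in fact a bijection of $\mathcal{C}_m^{s,d}$. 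This will be convenient later but is not needed here.

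\emph{Equivariance of the process.} Let $O_j(A)\subseteq\mathbb{Z}_m$ be the set of occupied spots after cars $c_1,\dots,c_j$ have parked under $A$, and let $p_j(A)$ be the spot taken by $c_j$. We prove by induction on $j$ that $p_j(\rho A)=\rho(p_j(A))$ and $O_j(\rho A)=\rho(O_j(A))$. The base case is $O_0=\emptyset$.

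For the inductive step, car $c_j$ under $\rho A$ carries the choices $\rho a_1,\dots,\rho a_s$ and faces the occupied set $\rho(O_{j-1}(A))$. Since $\rho$ is a bijection, $\rho a_i$ is free for $\rho(O_{j-1}(A))$ exactly when $a_i$ is free for $O_{j-1}(A)$. Hence the first free choice has the same index $i$ in both processes, and the car parks at $\rho a_i$.

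If instead all $s$ choices are occupied, the sweep inspects $a_s\oplus 1,\,a_s\oplus 2,\dots$ under $A$ and $\rho a_s\oplus 1=\rho(a_s\oplus1),\,\rho(a_s\oplus 2),\dots$ under $\rho A$. By the same freeness correspondence, the first free spot found is $\rho(a_s\oplus t)$ for the same $t$. In either case $p_j(\rho A)=\rho(p_j(A))$, and $O_j(\rho A)=\rho(O_j(A))$ follows.

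\emph{Conclusion.} Taking $j=n$, we get $O_n(\rho A)=\rho(O_n(A))$. The empty spot is the unique element of $\mathbb{Z}_m\setminus O_n$, and $\rho$ is a bijection, so $E(\rho A)=\rho(E(A))$.

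The only delicate point is the sweep. It is defined relative to the last choice $a_s$ rather than relative to some fixed origin, and this is exactly what makes it commute with $\rho$. We also need the sweep to terminate, which holds because $|O_{j-1}|\le n-1<m$, as noted after Definition~\ref{def:cpark}. Both checks are immediate once $\rho(x\oplus k)=\rho(x)\oplus k$ is in hand, so I expect no real obstacle.
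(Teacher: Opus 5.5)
Your proof is correct and follows the paper's argument essentially step for step. Rotation leaves the increments unchanged, so it preserves admissibility. An induction on the number of parked cars shows that the occupied set for $\rho A$ is the $\rho$-image of the one for $A$; the sweep commutes with $\rho$ because it starts from the last choice. Taking complements then gives $E(\rho A)=\rho(E(A))$.
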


\begin{proof}
Admissibility is defined by clockwise increments and the return gap, rotation invariants, so $\rho$ permutes $\mathcal{C}_m^{s,d}$. We show by induction on $j$ that after $j$ cars have parked, the occupied set for $\rho A$ is the $\rho$-image of that for $A$. It holds for $j=0$. Assuming it after $j-1$ cars, car $c_j$ in $\rho A$ tries $\rho(a_{1,j}),\dots$, and $\rho(a_{i,j})$ is occupied iff $a_{i,j}$ is occupied for $A$; the sweep from $\rho(a_{s,j})$ visits the $\rho$-images of the spots visited by the sweep from $a_{s,j}$, in order. Hence $c_j$ parks at the $\rho$-image of its spot in $A$; taking complements gives $E(\rho A)=\rho(E(A))$.
\end{proof}

\begin{theorem}\label{thm:closed}
Let $s\geq 1$, $d\geq 1$, $m\geq sd$ and $n=m-1$. For every spot $j\in\mathbb{Z}_m$,
$$\bigl|\{A\in\mathcal{C}_m^{s,d}:\ E(A)=j\}\bigr|=m^{\,n-1}\binom{m-sd+s-1}{s-1}^{\!n}.$$
\end{theorem}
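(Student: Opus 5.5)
The proof is a direct Pollak-type orbit argument built on Lemma~\ref{lem:equiv}. For $j\in\mathbb{Z}_m$ write $F_j=\{A\in\mathcal{C}_m^{s,d}: E(A)=j\}$. Since every preference parks all $n=m-1$ cars and leaves exactly one spot empty, $E$ is a well-defined map $\mathcal{C}_m^{s,d}\to\mathbb{Z}_m$. Hence the sets $F_1,\dots,F_m$ partition $\mathcal{C}_m^{s,d}$, and
$$\sum_{j\in\mathbb{Z}_m}|F_j|=\bigl|\mathcal{C}_m^{s,d}\bigr|=m^{n}\binom{m-sd+s-1}{s-1}^{n},$$
by Definition~\ref{def:cpark} and \eqref{eq:tuples}.

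The key step is to show $|F_j|=|F_{j\oplus 1}|$ for every $j$. By Lemma~\ref{lem:equiv}, $\rho$ maps $\mathcal{C}_m^{s,d}$ to itself. It is a bijection there, since $\rho^m$ is the identity, so $\rho^{m-1}$ is its inverse and also preserves admissibility. The equivariance $E(\rho A)=\rho(E(A))$ shows that $\rho$ maps $F_j$ into $F_{j\oplus1}$. Likewise $\rho^{-1}=\rho^{m-1}$ maps $F_{j\oplus 1}$ into $F_j$, by applying equivariance $m-1$ times. Therefore $\rho$ restricts to a bijection $F_j\to F_{j\oplus 1}$. Iterating, all $m$ fibres have the same size, because the cyclic group $\langle\rho\rangle$ acts transitively on $\mathbb{Z}_m$.

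Dividing the total count by $m$ gives
$$|F_j|=\frac{1}{m}\,m^{n}\binom{m-sd+s-1}{s-1}^{n}=m^{n-1}\binom{m-sd+s-1}{s-1}^{n}.$$
The hypothesis $m\ge sd$ ensures that the set of admissible tuples is nonempty and that the binomial expression in \eqref{eq:tuples} is the correct count. When $m<sd$, the top entry $m-sd+s-1$ is smaller than $s-1$, so the binomial vanishes, as the definition notes.

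There is no real obstacle here; the substantive work is in Lemma~\ref{lem:equiv} and in the count \eqref{eq:tuples}. The one point needing care is that the equivariance gives a \emph{bijection} between fibres and not merely a map into the next fibre. I will handle this by the invertibility of $\rho$ on $\mathcal{C}_m^{s,d}$, as above. I should also confirm that the count \eqref{eq:tuples} is right: with $k_i'=k_i-d\ge0$, the constraint becomes $\sum k_i'\le m-sd$, whose number of solutions is $\binom{m-sd+s-1}{s-1}$, and the factor $m$ comes from the free choice of the anchor $a$. I take this count from the paper as given.
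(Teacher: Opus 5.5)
Your proposal is correct and follows essentially the same argument as the paper. Lemma~\ref{lem:equiv} makes $\rho$ a bijection $\{E=j\}\to\{E=\rho(j)\}$, these $m$ fibres partition $\mathcal{C}_m^{s,d}$, and dividing $|\mathcal{C}_m^{s,d}|$ by $m$ gives the formula. Your explicit check that $\rho^{-1}=\rho^{m-1}$ maps $\{E=\rho(j)\}$ back into $\{E=j\}$ spells out the ``onto'' that the paper simply asserts.
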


\begin{proof}
By Lemma \ref{lem:equiv}, $\rho$ maps $\{E=j\}$ onto $\{E=\rho(j)\}$; these $m$ sets partition $\mathcal{C}_m^{s,d}$, so each has cardinality $|\mathcal{C}_m^{s,d}|/m$, which is the stated value by \eqref{eq:tuples}.
\end{proof}

\begin{corollary}[Two choices]\label{cor:s2}
For $s=2$ and $m=n+1$, $\ \bigl|\{E(A)=j\}\bigr|=(n+1)^{\,n-1}(n-2d+2)^{\,n}.$
\end{corollary}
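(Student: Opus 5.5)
The corollary follows by specializing Theorem \ref{thm:closed} to $s=2$. With $s=2$ and $m=n+1$, the formula reads
$$\bigl|\{E(A)=j\}\bigr| = m^{n-1}\binom{m-2d+1}{1}^{n} = (n+1)^{n-1}\,(m-2d+1)^{n},$$
and substituting $m=n+1$ gives $m-2d+1=n-2d+2$. This is exactly the claimed expression.

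The only point needing care is the hypothesis $m\ge sd$ of Theorem \ref{thm:closed}, which here becomes $n+1\ge 2d$. Under it we have $n-2d+2\ge 1$, so the binomial $\binom{m-2d+1}{1}$ equals its linear expression and Theorem \ref{thm:closed} applies directly.

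If instead $m<2d$, the set $\mathcal{C}_m^{2,d}$ is empty by \eqref{eq:tuples}, since no increment $k_1$ can satisfy $d\le k_1\le m-d$. The count is then $0$. The right-hand side $(n+1)^{n-1}(n-2d+2)^n$ is also $0$ when $m=2d-1$, since then $n-2d+2=0$. For smaller $m$ it need not vanish, so the corollary should be read under the standing hypothesis $m\ge 2d$ of Theorem \ref{thm:closed}.

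There is no real obstacle here; it is a direct substitution. As a sanity check, the case $d=1$ gives $(n+1)^{n-1}n^n$, matching the abstract.
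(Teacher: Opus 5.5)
Your proof is correct and takes the same approach as the paper: the corollary is Theorem~\ref{thm:closed} with $s=2$, where $\binom{m-2d+1}{1}=m-2d+1=n-2d+2$ and $m^{n-1}=(n+1)^{n-1}$. Your caveat about the hypothesis is also right and sharper than the paper's own remark: the formula needs $m\ge 2d$, though it still gives the correct value $0$ at $m=2d-1$. For $m<2d-1$, however, the right-hand side is a nonzero power of a negative number while the actual count is $0$.
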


\begin{remark}
For $s=2$, $d=1$ this is $(n+1)^{n-1}n^{n}$: the classical parking count appears as a factor. Section \ref{sec:refined} explains why. For $m<sd$ the model is empty, matching the vanishing binomial.
\end{remark}

\section{Refinement: equidistribution at fixed increments}\label{sec:refined}

The proof of Theorem \ref{thm:closed} in fact localizes to a much finer statement, which is the conceptual heart of the product structure. For a car $j$ let $\kappa_j:=(k_{1,j},\dots,k_{s-1,j})$ be its increment vector and $\kappa=(\kappa_1,\dots,\kappa_n)$ the increment matrix; the class
$$\mathcal{C}_\kappa:=\{A\in\mathcal{C}_m^{s,d}:\ A\text{ has increment matrix }\kappa\}$$
is parametrized by the anchor vector, so $|\mathcal{C}_\kappa|=m^{\,n}$, and there are $\binom{m-sd+s-1}{s-1}^{n}$ classes.

\begin{theorem}[Refined equidistribution]\label{thm:refined}
For every admissible increment matrix $\kappa$ and every spot $j\in\mathbb{Z}_m$,
$$\bigl|\{A\in\mathcal{C}_\kappa:\ E(A)=j\}\bigr|=m^{\,n-1}.$$
\end{theorem}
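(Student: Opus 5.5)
The plan is to repeat the proof of Theorem \ref{thm:closed} with $\mathcal{C}_m^{s,d}$ replaced by the class $\mathcal{C}_\kappa$. The only new ingredient is that the rotation $\rho$ preserves each class $\mathcal{C}_\kappa$.

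First, observe that $\rho$ maps $\mathcal{C}_\kappa$ to itself. If car $j$ carries $(a, a\oplus k_1,\dots,a\oplus(k_1+\cdots+k_{s-1}))$, then its rotated tuple is $(a\oplus 1, (a\oplus 1)\oplus k_1,\dots)$. This tuple has the same increments $k_{i,j}$ and the same return gap, so it has the same increment vector $\kappa_j$. Hence $\rho(\mathcal{C}_\kappa)\subseteq\mathcal{C}_\kappa$. Since $\rho^m$ is the identity, $\rho$ restricts to a bijection of $\mathcal{C}_\kappa$. Equivalently, in the anchor parametrization $\mathcal{C}_\kappa\cong\mathbb{Z}_m^n$, the rotation $\rho$ acts as the diagonal translation of the anchor vector by $(1,\dots,1)$.

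Second, invoke Lemma \ref{lem:equiv}, which says $E(\rho A)=\rho(E(A))$ for every $A$, in particular for $A\in\mathcal{C}_\kappa$. Therefore $\rho$ maps $\{A\in\mathcal{C}_\kappa: E(A)=j\}$ injectively into $\{A\in\mathcal{C}_\kappa:E(A)=\rho(j)\}$. Applying $\rho^{-1}=\rho^{m-1}$, which also preserves $\mathcal{C}_\kappa$, gives the reverse inclusion, so these are bijections. Iterating around the cycle $j,\rho(j),\dots$ shows that all $m$ fibres $\{E=j\}\cap\mathcal{C}_\kappa$ have the same size.

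Third, count. Every preference leaves exactly one empty spot, so these $m$ fibres partition $\mathcal{C}_\kappa$. The anchor vector ranges freely over $\mathbb{Z}_m^n$, so $|\mathcal{C}_\kappa|=m^n$. Hence each fibre has size $m^{n-1}$.

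There is no substantial obstacle; the only point to check carefully is that rotation fixes the increment matrix rather than merely preserving admissibility. This holds because increments are differences $a_{i+1}\ominus a_i$, and these are unchanged by translation. As a consistency check, summing the refined count over the $\binom{m-sd+s-1}{s-1}^n$ classes recovers Theorem \ref{thm:closed}. This sum also exhibits the product structure of the formula.
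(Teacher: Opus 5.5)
Your proof is correct and is essentially the paper's argument: $\rho$ translates all anchors while fixing the increment matrix, so it permutes $\mathcal{C}_\kappa$. Lemma~\ref{lem:equiv} then makes the $m$ fibres $\{E=j\}\cap\mathcal{C}_\kappa$ equinumerous, each of size $m^{n}/m=m^{n-1}$, and your check that increments are recovered as clockwise differences (so rotation fixes the class, not merely admissibility) is a detail the paper states only briefly.
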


\begin{proof}
The rotation shifts all anchors by one and leaves the increments unchanged, so it maps $\mathcal{C}_\kappa$ bijectively onto itself; by Lemma \ref{lem:equiv} it maps $\{E=j\}\cap\mathcal{C}_\kappa$ onto $\{E=\rho(j)\}\cap\mathcal{C}_\kappa$. These $m$ sets partition $\mathcal{C}_\kappa$, so each has cardinality $m^{n}/m=m^{\,n-1}$.
\end{proof}

\begin{corollary}[Structural explanation]\label{cor:explain}
Summing over the $\binom{m-sd+s-1}{s-1}^{n}$ increment matrices recovers Theorem \ref{thm:closed}, exhibiting the count as a product with transparent factors:
$$\bigl|\{E=j\}\bigr|=\underbrace{\binom{m-sd+s-1}{s-1}^{\!n}}_{\text{free choice of increments}}\times\underbrace{m^{\,n-1}}_{\text{anchors, \`a la Pollak}}.$$
For $s=2$, $d=1$, the factorization $(n+1)^{n-1}n^{n}$ reads: $n^{n}$ free choices of one increment per car, times $(n+1)^{n-1}$ anchor configurations in every class --- the classical count arising because, at fixed increments, the anchors reproduce Pollak's situation.
\end{corollary}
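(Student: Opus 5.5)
The statement to prove is Corollary \ref{cor:explain}. The plan is to derive it by summing Theorem \ref{thm:refined} over the increment classes, so that the product form falls out of a disjoint union rather than a computation.

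\textbf{The classes $\mathcal{C}_\kappa$ partition $\mathcal{C}_m^{s,d}$.} By Definition \ref{def:ctuples}, an admissible tuple $(a_1,\dots,a_s)$ determines its anchor $a=a_1$. Since every increment satisfies $d\le k_i$ and the increments obey $\sum k_i\le m-d<m$, each $k_i$ is recovered as the clockwise distance from $a_i$ to $a_{i+1}$, taken in $\{1,\dots,m-1\}$. Hence the map $A\mapsto(\text{anchor vector},\kappa)$ is a bijection from $\mathcal{C}_m^{s,d}$ onto $\mathbb{Z}_m^n\times K^n$, where
$$K=\Bigl\{(k_1,\dots,k_{s-1}):k_i\ge d,\ \textstyle\sum k_i\le m-d\Bigr\}.$$

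\textbf{Size of $K$.} The substitution $k_i=d+\ell_i$ turns the constraints into $\ell_i\ge0$ with $\sum\ell_i\le m-sd$. Adding a slack variable and applying stars and bars gives
$$|K|=\binom{m-sd+s-1}{s-1},$$
which agrees with \eqref{eq:tuples}. Each car chooses its increment vector independently, so there are $|K|^n$ increment matrices.

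\textbf{The sum.} Because the $\mathcal{C}_\kappa$ are disjoint and cover $\mathcal{C}_m^{s,d}$,
$$\bigl|\{E=j\}\bigr|=\sum_{\kappa\in K^n}\bigl|\{A\in\mathcal{C}_\kappa:E(A)=j\}\bigr|.$$
By Theorem \ref{thm:refined}, every summand equals $m^{n-1}$. The total is therefore $|K|^n\,m^{n-1}$, which is the displayed product and matches Theorem \ref{thm:closed}.

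\textbf{The case $s=2$, $d=1$.} Here $K=\{k_1:1\le k_1\le m-1\}$, so $|K|=m-1=n$. Each car has $n$ possible increments, giving $n^n$ increment matrices, and the anchor factor is $m^{n-1}=(n+1)^{n-1}$.

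The interpretive clause, that the anchors "reproduce Pollak's situation," is explanatory rather than a precise claim. The only rigorous content I would assert is the one already proved: the anchor vector in each class ranges over $\mathbb{Z}_m^n$, the class is closed under $\rho$, and the $m$ level sets of $E$ in it are equinumerous. This is the mechanism of Pollak's argument, which yields $(n+1)^{n-1}$.

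The only point needing care is the injectivity of the anchor-and-increment parametrization, which rests on $\sum k_i<m$. Otherwise the whole argument is bookkeeping.
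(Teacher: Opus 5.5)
Your proposal is correct and is essentially the paper's argument: the corollary comes from summing Theorem~\ref{thm:refined} over the $\binom{m-sd+s-1}{s-1}^{n}$ increment classes $\mathcal{C}_\kappa$, which partition $\mathcal{C}_m^{s,d}$. Your explicit check that the anchor--increment parametrization is injective (via $d\le k_i\le\sum k_i\le m-d<m$) makes precise a point the paper leaves implicit. Treating the ``Pollak's situation'' clause as interpretive rather than proved matches the paper, which poses the corresponding bijection as an open problem.
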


\begin{remark}[Kaplansky's lemma and powers of cycles]
The unordered supports of the admissible tuples are the subsets of $\mathbb{Z}_m$ with all circular gaps at least $d$ --- the independent $s$-subsets of the $(d-1)$-th power of the cycle --- counted by Kaplansky's lemma \cite{Kaplansky} as $\frac{m}{s}\binom{m-sd+s-1}{s-1}$; each supports exactly $s$ admissible tuples, recovering \eqref{eq:tuples}. Independent subsets of powers of cycles are studied in \cite{Codara}. The model thus sits at the meeting point of Pollak's argument and Kaplansky's lemma.
\end{remark}

\begin{remark}[Numerical verification]\label{rem:verif}
Independently of the proofs, Theorems \ref{thm:closed} and \ref{thm:refined} were verified by exhaustive enumeration --- generating every preference, simulating the rule, and recording the empty-spot distribution overall and within each increment class. The equidistribution is exact and matches the formulas in all cases below.
\begin{center}
\begin{tabular}{c|r|r|r}
\hline
$(m,d,s)$ & admissible tuples & $|\{E=j\}|$ & classes ($\kappa$)\\
\hline
$(3,1,2)$ & $6$ & $12$ & $4$\\
$(4,1,2)$ & $12$ & $432$ & $27$\\
$(4,2,2)$ & $4$ & $16$ & $1$\\
$(5,1,2)$ & $20$ & $32\,000$ & $256$\\
$(5,2,2)$ & $10$ & $2\,000$ & $16$\\
$(4,1,3)$ & $12$ & $432$ & $27$\\
$(5,1,3)$ & $30$ & $162\,000$ & $1\,296$\\
$(6,2,3)$ & $6$ & $1\,296$ & $1$\\
$(5,1,4)$ & $20$ & $32\,000$ & $1$\\
\hline
\end{tabular}
\end{center}
In every increment class of every listed case, the empty spot is exactly equidistributed with value $m^{n-1}$.
\end{remark}

\section{Open problems}\label{sec:open}

\begin{problem}
Give a bijective proof of the factorization of Corollary \ref{cor:s2} for $d=1$, refining Theorem \ref{thm:refined} to a canonical bijection between each anchor class $\{E=j\}\cap\mathcal{C}_\kappa$ and the set of classical parking functions.
\end{problem}

\begin{problem}
Quantify the boundary effect between the circular and linear models. For $(n,d)=(3,1)$, $s=2$, the conditioned circular count is $432$, while exhaustive enumeration of the linear two-choice model gives $24$; characterize the circular preferences whose tuples and trajectories avoid crossing the empty spot, and relate them to the linear model.
\end{problem}

\begin{problem}
Study refined statistics on the circle (lucky cars, total clockwise displacement) and their generating functions at fixed increments, where Theorem \ref{thm:refined} suggests exact product answers.
\end{problem}

\paragraph*{Acknowledgements.}
The authors thank the RECITS Laboratory, USTHB, for its support.

\section*{Declarations}

\paragraph*{Funding.}
This research did not receive any specific grant from funding agencies in the public, commercial, or not-for-profit sectors.

\paragraph*{Competing interests.}
The authors have no competing interests to declare that are relevant to the content of this article.

\paragraph*{Data availability.}
The computer verification scripts supporting the results of this article are available from the corresponding author upon request.

\paragraph*{Use of artificial intelligence.}
All results are due to the authors, who reviewed and validated every statement and value; the AI assistant Claude (Anthropic) was used under their supervision for the numerical verification of the results by exhaustive enumeration, for assistance with the drafting and \LaTeX{} typesetting, and for linguistic proofreading, and the authors take full responsibility for the content of this article.

\end{document}